\title[Distinguishing simple groups]
{Distinguishing simple groups}
\author{Mariusz Grech, Andrzej Kisielewicz}
\address{Faculty of Pure and Applied Mathematics, Wrocław University of Science and Technology \\
Wybrzeże Wyspiańskiego Str. 27,
50-370 Wrocław, Poland}
\email{[mariusz.grech,andrzej.kisielewicz]@pwr.edu.pl}
\thanks{
%\colorbox{yellow}
{Supported in part by Polish NCN grant 2016/21/B/ST1/03079}}
\keywords{Distinguishing number, graph, automorphism group, simple group, regular set}
\begin{document}

\newtheorem{Theorem}{Theorem}[section]
\newtheorem{Lemma}[Theorem]{Lemma}
\newtheorem{Fact}[Theorem]{Fact}
\newtheorem{Proposition}[Theorem]{Proposition}
\newtheorem{Corollary}[Theorem]{Corollary}

\newcommand{\A}{G}
\newcommand{\B}{H}
\newcommand{\C}{K}
\newcommand{\XX}{X}
\newcommand{\YY}{Y}
\newcommand{\xx}{x}
 
\renewcommand{\to}{\rightarrow}
\renewcommand{\gg}{g}
\newcommand{\hh}{h}

\renewcommand{\iff}{\longleftrightarrow}

%\overleftrightarrow
 
\begin{abstract}
The distinguishing number $D(\Gamma)$ of a graph $\Gamma$ is the least size of a partition of the vertices of $\Gamma$ such that no non-trivial automorphism of $\Gamma$ preserves this partition. We show that if the automorphism group of a graph $\Gamma$ is simple, than $D(\Gamma)=2$. This is obtained by establishing the distinguishing number for all possible actions of simple groups. 
%Except known transitive exceptions we describe all intransitive exceptions.
\end{abstract}
 
\maketitle

\section{Introduction}
 
The concept of distinguishing number of a graph has been introduced by Albertson and Collins \cite{AC} in 1996. Since then an extensive literature on this number and related topics has been developed and the area is still flourishing.
%(see \cite{BC,CT,PT} for the most recent results and further references). ???
 
In 2004, Tymoczko \cite{tym} has extended the concept to the distinguishing number of an arbitrary group action (cf. \cite{cha,cha2}). This turned out to be very fruitful once, in 2011 \cite{BC}, it was realized that in permutation group theory the problem had been investigated for many years as a part of the study of regular sets. 
 
%property of having the distinguished number equal two is equivalent to having a regular set.  

Given a group $\A$ acting on a set $\XX$, the \emph{distinguishing number} of this action, denoted $D_\XX(\A)$ or simply $D(\A)$, is the least size of a partition of $\XX$ such that no element of $\A$ preserves this partition, unless it fixes all $\xx\in \XX$. The distinguishing number of a graph $\Gamma$ is the distinguishing number of the automorphism group of $\Gamma$. 
Often, rather than about partition, authors speak about labeling (coloring) points (vertices) to destroy or break down the symmetry. Similarly, using the automorphism group, one defines the distinguishing number for other structures.
 
If the action of $\A$ on $\XX$ if faithful we speak of the distinguishing number of the (corresponding) permutation group $(\A,\XX)$. To have $D(\A)=2$ for such a group is equivalent to have a \emph{regular set} (that is, a subset of $\XX$ whose set-stabilizer in $\A$ is trivial \cite{CNS,SV}).

%%%??? lepiej regular orbit on the power set of $X$
 
The oldest remarkable result in the area is that by Gluck \cite{glu} who proved in 1983 that 
%In particular, by Gluck result \cite{glu} on groups (obtained as early as in 1983) we obtain that  
if the order of an abstract group $\A$ is odd, then for each action $\A$ has a regular set, that is, $D(\A)=2$. This is also true for all primitive solvable permutation groups, but a few exceptions. Cameron et al. \cite{CNS,ser} showed that all but finitely many primitive permutation groups $\A$, not containing $A_n$, have $D(\A)=2$. Recently, similar results have been obtained for quasiprimitive and semiprimitive permutation groups in \cite{dev}. 
 
The Motion Lemma of Russell and Sundaram  \cite{rs} states that if the minimal degree of a permutation group $\A$ (the motion of the action) is at least $2 \log_2 |\A|$, then its distinguishing number is two. Using this, Conder and Tucker  cite{CT} in 2011 proved  that if we consider a vector space, group, or map, then in all but finitely many cases, the action of the automorphism group has the distinguishing number two. They also suggested that, in general, having the distinguishing number two is a generic property. 
 
Similar results, confirming this belief,  have been proved for general linear groups in \cite{cha,KWZ}. Further, if one takes a Cartesian product of enough copies of the same graph, then the distinguishing number is two (see \cite{alb,IK}). Also, for all maps with more than 10 vertices, the action of the automorphism group on the vertices has distinguishing number two (see  \cite{tuc,tuc2}, cf. \cite{PT}).

Now, it is important to realize that even if two graphs have the same automorphism group their distinguishing numbers may be different. This is so, because this number depends on the action of a group, not merely on its abstract structure. Many exceptions to the 
generic case  of 2-distinguishability  are connected with intransitive actions. The study of distinguishing number involves necessarily intransitive graphs. Therefore, as we demonstrate in this paper, the details of the construction of intransitive permutation groups may be especially useful in this study. 
 
The distinguishing number is a local property of graphs in the sense that it is enough to replace one vertex by a clique to increase this number. This leads to a Brooks type inequality $D(\Gamma) \leq \Delta(\Gamma)+1$ for connected graphs (see \cite{CTr,KWZ}). We believe however, on the basis of our study,  that a real obstacle here are the maximal sizes $\omega(\Gamma)$ of a clique and $\omega(\overline \Gamma)$ of an anticlique, and in contrast with the chromatic number of a graph this provides an actual upper bound for the distinguishing number.
\bigskip
 
\noindent\textbf{Conjecture.}\textit{
Let $\Gamma$ be an arbitrary graph. Then
$$ D(\Gamma) \leq 1+\max\{\omega(\Gamma),\omega(\overline \Gamma)\}.$$
}
 
The complete bipartite graph $K_{n,n}$ shows that this inequality cannot be sharpened. 
 
On the other hand, for intransitive graphs, it is easy to see that $D(\Gamma)$ is bounded from above by the maximum of distinguishing numbers of the action of the automorphism group on the orbits. Yet, this bound is very rough. We show that in case of simple group the maximum may be replaced by the minimum.  Constructions of intransitive groups, corresponding to the edges joining vertices in different orbits of graphs, usually decrease the distinguishing number in a radical way. 
 
In this paper we deal with the case when the automorphism group $Aut(\Gamma)$ of a graph $\Gamma$ is simple. We show that this case is instructive in many aspects. It may be viewed as the other extreme of solvable groups, and it is natural to check if having distinguishing number two is also typical in this very case. The problem is that, while we know that the class of graphs with simple automorphism group is large and diverse, we have no general results on such graphs that could be useful to compute their distinguishing number.
%the class of graphs with simple automorphism group is largely unknown. -- there are know such graphs and constructions, but 
%By \cite{god}, we know that for each simple group $\A$ there exists a transitive graph $\Gamma$ with $Aut(\Gamma) = \A$ forming a regular representation of $\A$. Liebeck \cite{lieb} has constructed graphs of minimal order whose automorphism group is an alternating group. These graphs are intransitive and possible modifications of the construction shows that the class of such graphs is pretty large. 
%We know very little about graphs whose automorphism groups are other simple groups (we say more on this topic after Corollary~\ref{c:}).\???
Therefore, our approach, rather than considering graphs, is to consider all possible actions of simple groups, and exclude those that are certainly not the automorphism groups of graphs. 
 
Of course, we make use of that we have extensive and  deep knowledge of simple groups. From the results established for primitive and quasiprimitive permutation groups we can infer that, with known exceptions, transitive actions of simple groups have the distinguishing number two. So, in this paper, we focus on intransitive actions. Using a general result on the structure of intransitive permutation groups we show that in case of simple groups these actions have a special and very nice structure of parallel sums. This allows to check effectively all possible exceptions, using known classification results and computation machinery.

As we shall see, the case of simple groups illustrates perfectly all the discussed phenomena and may be a good starting point in further study. A counterpart of a special role of cliques and anticliques for graphs is the role of the alternating groups among simple groups. Our paper shades some light on this. 
 
For computer calculations we have used system GAP 4.10.1. In cases of applying GAP, we give only those computational details that seem sufficient for the reader to verify our results using any computational system for permutation groups. Since in the case of simple groups all actions are faithful, we use the terminology of permutation groups rather than those of actions of groups.

\section{Intransitive simple groups}\label{s:ss}
%{Subdirect sum of permutation groups}
 
To describe the structure of intransitive simple groups we need to recall the concept of the \emph{subdirect sum of permutation groups} and the general result on the structure of intransitive  permutation groups.  We give also some comments that allow to apply the recalled results in a proper and easy way.
All considered permutation groups are finite and considered up to \emph{permutation isomorphism} \cite[p.~17]{DM} (i.e., two groups that differ only in labeling of points are treated as the same).
 
Given two permutation groups $\A \leq Sym(\XX)$ and $\B \leq Sym(\YY)$, the \emph{direct sum} $\A\oplus \B$ is the permutation group on the \emph{disjoint}
union $\XX \cup \YY$ defined as the set of all permutations $(\gg,\hh)$, $\gg\in \A, \hh\in \B$ such that (writing permutations on the right)
$$
\xx(\gg,\hh) =
\left\{\begin{array}{ll}
\xx\gg, & \mbox{if } \xx\in \XX\\
\xx\hh, & \mbox{if } \xx\in \YY
\end{array}\right.
$$
Thus, in $\A\oplus \B$, permutations of $\A$ and $\B$ act independently in a natural way on the disjoint union of the underlying sets.
 
We define the notion of the \emph{subdirect sum} following \cite{GK1} (and the notion of \emph{intransitive product} in \cite{kis1}).
Let $\B_1\lhd\; \A_1 \leq S_n$ and $\B_2\lhd\; \A_2\leq S_m$ be permutation groups such that $\B_1$ and $\B_2$ are normal subgroups of $\A_1$ and $\A_2$, respectively. Suppose, in addition, that factor groups $\A_1/\B_1$ and $\A_2/\B_2$ are (abstractly) isomorphic and $\phi : \A_1/\B_1 \to \A_2/\B_2$ is the isomorphism mapping. Then, by
$$
\A = \A_1[\B_1] \oplus_\phi \A_2[\B_2]
$$
we denote the subgroup of $\A_1 \oplus \A_2$ consisting of all permutations $(\gg,\hh)$, $\gg\in \A_1, \hh\in \A_2$, such that $\phi(\B_1\gg) = \B_2\hh$.
Each such group will be called the \emph{subdirect sum} of $\A_1$ and $\A_2$.
 
If $\B_1=\A_1$ and $\B_2=\A_2$, then $\A = \A_1 \oplus \A_2$ is the usual direct sum of $\A_1$ and $\A_2$.
If $\B_1$ and $\B_2$ are trivial one-element subgroups, $\phi$ is the isomorphism of $\A_1$ onto $\A_2$, and the sum is called, in such a case, the \emph{parallel sum} of $\A_1$ and $\A_2$. Then the elements of $\A$ are of the form $(\gg,\phi(\gg))$, $\gg\in \A_1$, and both the groups act in a parallel manner on their sets \emph{via} isomorphism $\phi$. In this case we use the notation $\A=\A_1||_\phi \A_2$, where $\phi$ is an (abstract) isomorphism between $\A_1$ and $\A_2$, or simply $\A=\A_1|| \A_2$ if there is no need to refer to $\phi$.
Note that $\A_1$ and $\A_2$ need to be abstractly isomorphic, but not necessarily permutation isomorphic, and they may act on sets of different cardinalities.
 
In the special case when, in addition, $\A_1=\A_2=\A$ and $\phi$ is the identity, we write $\A^{(2)}$ for $\A||_\phi \A$. More generally, for $r\geq 2$, by $\A^{(r)}$ we denote the permutation group in which the group $\A$ acts in the parallel way (\emph{via} the identity isomorphisms) on $r$ disjoint copies of a set $\XX$. This group is called
the \emph{parallel multiple} of $\A$, and its element are denoted $\gg^{(r)}$ with $\gg\in \A$. In particular, we admit $r=1$ and put $\A^{(1)}=\A$.
For example, the cyclic group generated by the permutation
$\gg = (1,2,3)(4,5,6)(7,8,9)$ is permutation isomorphic to the parallel multiple $C_3^{(3)}$, where $C_3$ denotes the permutation group on $\{1,2,3\}$ generated by the cycle $(1,2,3)$.
 
The main fact established in \cite{kis1} is that every intransitive group has the form of a subdirect sum, and its components can be easily described. Let $\A$ be an intransitive group acting on a set $\XX = \XX_1 \cup \XX_2$ in such a way that $\XX_1$ and $\XX_2$ are disjoint fixed blocks of $\A$. Let $\A_1$ and $\A_2$ be restrictions of $\A$ to the sets $\XX_1$ and $\XX_2$, respectively (they are called also \emph{constituents}). Let $\B_1 \leq \A_1$ and $\B_2 \leq \A_2$ be the subgroups fixing pointwise $\XX_2$ and $\XX_1$, respectively. Then we have
 
\begin{Theorem} \cite[Theorem 4.1]{kis1} \label{th:s}
If $\A$ is a permutation group as described above, then
$\B_1$ and $\B_2$ are normal subgroups of $\A_1$ and $\A_2$, respectively,
the factor groups $\A_1/\B_1$ and $\A_2/\B_2$ are abstractly isomorphic, and
$$\A = \A_1[\B_1] \oplus_\phi \A_2[\B_2],$$
where $\phi$ is an isomorphism of the factor groups.
\end{Theorem}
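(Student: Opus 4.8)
The statement is the permutation-group incarnation of Goursat's lemma, and the plan is to prove it directly by analysing the two restriction homomorphisms. Write $\rho_i\colon \A\to Sym(\XX_i)$ for the homomorphism restricting a permutation of $\XX$ to the $\A$-invariant set $\XX_i$, so that $\A_i=\rho_i(\A)$ is the $i$-th constituent, and set $N_i=\ker\rho_i$, the subgroup of $\A$ fixing $\XX_i$ pointwise. Unwinding the definitions, $\B_1$ is exactly $\rho_1(N_2)$ and $\B_2=\rho_2(N_1)$, and clearly $\A\le\A_1\oplus\A_2$ under $\gg\mapsto(\rho_1(\gg),\rho_2(\gg))$, since an element of $\A$ restricted to $\XX_i$ lies in $\A_i$. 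So the whole task reduces to showing that this copy of $\A$ inside $\A_1\oplus\A_2$ is precisely $\A_1[\B_1]\oplus_\phi\A_2[\B_2]$ for a suitable isomorphism $\phi$.

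I would first dispatch the three preliminary assertions. Normality is automatic: $N_1,N_2\lhd\A$ as kernels of homomorphisms, and since $\rho_1$ maps $\A$ onto $\A_1$, the image $\B_1=\rho_1(N_2)$ is normal in $\A_1$; symmetrically $\B_2\lhd\A_2$. For the common quotient I would consider the surjection $\psi_1\colon\A\to\A_1/\B_1$, $\gg\mapsto\B_1\rho_1(\gg)$, and note by a one-line coset computation that $\rho_1(\gg)\in\B_1=\rho_1(N_2)$ holds iff $\gg\in N_1N_2$, so that $\ker\psi_1=N_1N_2$ (a subgroup, as both $N_i$ are normal) and hence $\A/N_1N_2\cong\A_1/\B_1$; by the symmetric argument $\A/N_1N_2\cong\A_2/\B_2$ as well. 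Composing these two isomorphisms produces the required $\phi\colon\A_1/\B_1\to\A_2/\B_2$, characterised by $\phi(\B_1\rho_1(\gg))=\B_2\rho_2(\gg)$ for all $\gg\in\A$; in particular $\A_1/\B_1$ and $\A_2/\B_2$ are abstractly isomorphic.

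For the identification $\A=\A_1[\B_1]\oplus_\phi\A_2[\B_2]$ I would prove the two inclusions. That every $\gg\in\A$, viewed as the pair $(\rho_1(\gg),\rho_2(\gg))$, satisfies $\phi(\B_1\rho_1(\gg))=\B_2\rho_2(\gg)$ is just the displayed property of $\phi$. Conversely, given a pair $(a_1,a_2)\in\A_1\oplus\A_2$ with $\phi(\B_1a_1)=\B_2a_2$, I would lift $a_1$ to some $\gg\in\A$ with $\rho_1(\gg)=a_1$ (possible since $a_1\in\A_1=\rho_1(\A)$), observe that then $\B_2\rho_2(\gg)=\phi(\B_1a_1)=\B_2a_2$, so $a_2\,\rho_2(\gg)^{-1}\in\B_2=\rho_2(N_1)$, pick $n\in N_1$ with $\rho_2(n)=a_2\,\rho_2(\gg)^{-1}$, and check that $\hh=n\gg\in\A$ has $\rho_2(\hh)=a_2$ and $\rho_1(\hh)=\rho_1(n)\rho_1(\gg)=a_1$ because $n\in N_1=\ker\rho_1$; hence $(a_1,a_2)=(\rho_1(\hh),\rho_2(\hh))\in\A$.

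I do not expect a genuine obstacle here, since the whole argument is bookkeeping around the isomorphism theorems; indeed it is exactly Goursat's lemma applied to the subdirect product $\A\le\A_1\times\A_2$. The two points that need attention are that $\B_i$ has to be recognised as the image under $\rho_i$ of the \emph{other} kernel $N_{3-i}$, not of $N_i$ itself, and that permutations must be composed consistently on a fixed side so that the coset computations defining $\ker\psi_1$ and carrying out the correction by $n$ above are read in the right direction.
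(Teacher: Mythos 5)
Your proof is correct and complete: identifying $\B_i$ as the image of the opposite kernel, extracting the common quotient $\A/N_1N_2$, and verifying both inclusions via the lifting-and-correction step is exactly the standard Goursat-type argument. The paper itself gives no proof of this statement (it is imported from \cite[Theorem~4.1]{kis1}), so there is nothing to contrast with; your self-contained derivation is the expected one and I see no gaps.
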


There is some subtlety regarding the parallel powers we have to be aware. A well-known fact is that the automorphism group $Aut(\A)$ of a group $\A$ may have outer automorphisms that are not given by the conjugation action of an element of $\A$. In the case of permutation groups $\A\leq Sym(\XX)$ some outer automorphism may still be given by the conjugation action of an element in $Sym(\XX)$. This corresponds generally to permuting elements of $\XX$, and such automorphisms are called \emph{permutation automorphisms}. They form a subgroup of $Aut(\A)$, which we denote by $PAut(\A)$. Often $PAut(\A)=Aut(\A)$, but some permutation groups have also other automorphisms, which we will call \emph{nonpermutation} automorphisms.
 
Now, if $\A = \B\oplus_\psi \B$, for some permutation group $\B$, where $\psi\in PAut(\A)$, then $\A$ is permutation isomorphic to $\B^{(2)}$ (i.e., $\A=\B^{(2)}$, according to our convention). If $\psi$ is a nonpermutation automorphism, then $\A \neq \B^{(2)}$. (More precisely, we should speak here about isomorphisms induced by automorphisms and make distinction between base sets of components, but we assume that this is contained in the notion of the \emph{disjoint union}, and we will make it explicit only when the need arises). An example is the alternating group $A_6$ that has a nonpermutation automorphism $\psi$ (cf. \cite{CL}). Then, $A_6^{(2)}$ and $A_6 ||_\psi A_6$ are not permutation isomorphic. As we shall see, this example is exceptional. The distinguishing number $D(A_6 ||_\psi A_6)=3$. 
 
%gdzie $A_5$ || ???
 
With this result we may turn to describing the structure of intransitive simple permutation groups.
Without loss of generality we may assume that groups we consider have no fixed points, as fixed points do not affect the distinguishing number. 
More precisely, if a permutation group $\A$ has precisely $m>0$ fixed points, then $\A=\A'\oplus I_m$, where $\A'$ has no fixed points, and $I_m$ is the \emph{trivial} group acting on $m$ points, that is, containing only the identity permutation. Obviously, $D(\A)=D(\A')$, and $\A$ and $\A'$ are abstractly isomorphic. We have the following.

\begin{Theorem}\label{p:simple}
Let $\A$ be a simple intransitive group with no fixed points. Then $A$ has the form of a parallel sum $\A=\B||_\phi \C$, where $\B$ and $\C$ are permutation groups abstractly isomorphic to $\A$.
\end{Theorem}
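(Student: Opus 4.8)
The plan is to invoke the structure theorem for intransitive groups (Theorem~\ref{th:s}) and then let the simplicity of $\A$ do all the work: it will force the two normal subgroups appearing in the subdirect decomposition to be trivial, so that the decomposition collapses to a parallel sum.

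First I would split the point set. Since $\A$ is intransitive it has at least two orbits; take $\XX_1$ to be one orbit and $\XX_2$ the union of all the remaining orbits, so that $\XX = \XX_1 \cup \XX_2$ is a partition into disjoint nonempty $\A$-invariant blocks. Let $\A_1, \A_2$ be the constituents on $\XX_1, \XX_2$ and $\B_1 \lhd \A_1$, $\B_2 \lhd \A_2$ the subgroups fixing $\XX_2$, resp.\ $\XX_1$, pointwise, exactly as in the hypotheses of Theorem~\ref{th:s}. That theorem then gives $\A = \A_1[\B_1] \oplus_\phi \A_2[\B_2]$ with $\phi : \A_1/\B_1 \to \A_2/\B_2$ an isomorphism; it remains only to identify $\B_1$ and $\B_2$ with the trivial subgroup.

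Now I would bring in simplicity. The kernel $N_i$ of the action of $\A$ on $\XX_i$ is a normal subgroup of $\A$, hence either trivial or all of $\A$. It cannot be all of $\A$: if $\A$ fixed $\XX_1$ pointwise, then every point of the orbit $\XX_1$ would be a fixed point of $\A$, and if $\A$ fixed $\XX_2$ pointwise, then every point of the nonempty set $\XX_2$ would be a fixed point — both contradicting the hypothesis that $\A$ has no fixed points. Hence $N_1 = N_2 = 1$, i.e.\ $\A$ acts faithfully on each of $\XX_1$ and $\XX_2$; in particular the restriction maps $\A \to \A_i$ are isomorphisms, so $\A_1$ and $\A_2$ are both abstractly isomorphic to $\A$. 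Moreover $\B_1$ is the isomorphic image in $\A_1$, under restriction, of the pointwise stabilizer of $\XX_2$ in $\A$, and that stabilizer is exactly $N_2 = 1$; symmetrically $\B_2 = 1$. Substituting $\B_1 = \B_2 = 1$ into the decomposition yields $\A = \A_1[1] \oplus_\phi \A_2[1]$ with $\phi : \A_1 \to \A_2$ an isomorphism, which is precisely the parallel sum $\A = \A_1 \,||_\phi\, \A_2$, with $\A_1, \A_2$ playing the roles of $\B, \C$.

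The argument is short, and I do not expect a substantive obstacle; the only point needing care is the bookkeeping in the last step — one must keep straight that $\B_1$ lives inside the constituent $\A_1$ and equals the restriction image of the \emph{pointwise} stabilizer of $\XX_2$, and that this pointwise stabilizer coincides with the kernel of the $\XX_2$-action. It is also worth noting explicitly where the "no fixed points" hypothesis is used: without it, $\XX_2$ (a union of orbits) could consist entirely of fixed points, and faithfulness of the $\XX_2$-constituent would fail, so the decomposition would not collapse.
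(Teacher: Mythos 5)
Your proof is correct and takes essentially the same route as the paper: both apply Theorem~\ref{th:s} and then observe that the pointwise stabilizer of each block (equivalently, the kernel of the action on the other block) is a normal subgroup of the simple group $\A$ that cannot be all of $\A$ because $\A$ has no fixed points, hence is trivial, collapsing the subdirect sum to a parallel sum. Your explicit identification of $\B_1$ with the restriction image of the kernel $N_2$ is just a more careful spelling-out of the paper's step ``$B'\oplus I_m$ is a normal subgroup of $\A$.''
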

\begin{proof}
By Theorem~\ref{th:s},  $\A = \B[\B']\oplus_\phi \C[\C']$, where each of $\B$ and $\C$ acts nontrivially on at least two points. Now, the group $B'\oplus I_m$ is a normal subgroup of $\A$, as $B'\lhd B$. Since $C$ is nontrivial, and $\A$ is simple, we infer that $\B'$ is trivial. Similarly, we observe that $\C'$ is trivial. It follows that $A=\B||_\phi \C$, 
$\phi$ is an isomorphism between $\B$ and $\C$, and $\A$ is isomorphic to both $\B$ and $\C$, as required. 
\end{proof}
 
The proposition above means that intransitive simple permutation groups with nontrivial orbits have always the form of parallel sums. Some remarks are needed to make a proper use of this result.
 
Note that (using the inverse isomorphism $\phi^{-1}$) we see easily that $G_1||_\phi G_2$ and $G_2 ||_{\phi^{-1}} G_1$ are permutation isomorphic, and since we treat permutation isomorphic groups as identical, the operation of the parallel sum may be considered to be commutative. Further, decomposing each summand step by step we can get a decomposition into transitive components. In particular,
in $G=(G_1 ||_\phi G_2)||_\psi G_3$ all the involved groups must be abstractly isomorphic, and $G$ is permutation isomorphic with $G_1 ||_{\phi'} (G_2||_{\psi'} G_3)$, where isomorphisms $\phi'$ and $\psi'$ are suitably determined by $\phi$ and $\psi$. Thus we may also consider this operation to be associative (up to permutation isomorphism).
 
So, generally, a simple intransitive permutation group (with no fixed points) is a parallel sum of two or more transitive components that are all abstractly isomorphic, and the action on the union of orbits is given by a system of suitable isomorphisms between components. 
 
Note however, that in case, when a group has two different actions on the set of the same cardinality, various systems of isomorphism may lead to different permutation groups; so the pointing out only transitive components may not define the parallel sum uniquely. (For example, projective symplectic group $PSp(4,3)$ has two nonequivalent actions on the set of cardinality $n=40$). A similar remark applies when there are two identical components with a nonpermutation automorphism.

\section{Distinguishing number and regular sets}\label{s:is}

Consider distinguishing labelings for parallel sums. 
It is easy to see that in this case, if we have a distinguishing $k$-labeling (i.e., one with $k$ labels) for one of the components, then
it is easy to construct a distinguishing $k$-labeling for the whole sum.
 
\begin{Lemma}\label{l:reg}
Let $\A= \B ||_\phi \C$ be a parallel sum of two groups. Then,  $$D(\A) \leq \min\{D(\B),D(\C)\}.$$
\end{Lemma}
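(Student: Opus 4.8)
### Proof proposal

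The plan is to show that a distinguishing labeling for one of the two summands lifts, essentially for free, to a distinguishing labeling for the whole parallel sum, by using the remaining summand as a "rigid decoration" that any partition-preserving automorphism is forced to respect on its own. Without loss of generality, by commutativity of the parallel sum (noted just after Theorem~\ref{p:simple}), assume $D(\B) = \min\{D(\B),D(\C)\}$ and set $k = D(\B)$. Recall that $\A = \B ||_\phi \C$ consists of the permutations $(\gg, \phi(\gg))$ with $\gg \in \B$, acting on the disjoint union $\XX \cup \YY$, where $\B \leq Sym(\XX)$ and $\C \leq Sym(\YY)$.

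First I would fix a distinguishing $k$-labeling $c_\XX : \XX \to \{1,\dots,k\}$ for $\B$, which exists by definition of $D(\B) = k$. Then I would extend it to a labeling $c : \XX \cup \YY \to \{1,\dots,k\}$ of the whole set by assigning the points of $\YY$ colors in a completely arbitrary way (for instance, colour all of $\YY$ with label $1$; here one only needs $k \geq 1$, which is automatic, so no case split is required). The claim is that $c$ is a distinguishing labeling for $\A$.

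Next I would verify the claim directly. Suppose $(\gg, \phi(\gg)) \in \A$ preserves the partition induced by $c$. Since the labels of $\XX$-points are never equal to labels of $\YY$-points only if we had chosen disjoint colour sets — but we did not — I must instead argue from the structure: the restriction of $(\gg,\phi(\gg))$ to $\XX$ is $\gg$, and since $\XX$ is a union of orbits (a fixed block of $\A$), $\gg$ maps $\XX$ to $\XX$; hence $\gg$ preserves the restriction of the partition to $\XX$, which is exactly the partition induced by $c_\XX$. Because $c_\XX$ is distinguishing for $\B$, it follows that $\gg$ fixes every point of $\XX$. But $\phi$ is an isomorphism, so $\gg = \mathrm{id}$ forces $\phi(\gg) = \mathrm{id}$ as well; here one uses that $\B$ acts faithfully on $\XX$ (it is a permutation group on $\XX$), so "$\gg$ fixes every point of $\XX$'' genuinely means $\gg$ is the identity of the abstract group $\B$. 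Hence $(\gg,\phi(\gg))$ is the identity permutation on $\XX \cup \YY$, and in particular it fixes every point. Therefore no non-trivial element of $\A$ preserves the partition, so $c$ witnesses $D(\A) \leq k = \min\{D(\B),D(\C)\}$.

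The only subtle point — and the one I would state carefully rather than gloss over — is the passage from "$\gg$ fixes $\XX$ pointwise'' to "$(\gg,\phi(\gg))$ is trivial''. This is where faithfulness of the constituent action on $\XX$ and the fact that the elements of $\A$ are precisely the pairs $(\gg, \phi(\gg))$ (so the $\YY$-part is completely determined by the $\XX$-part through the isomorphism $\phi$) are both used; it is exactly the feature that distinguishes a \emph{parallel} sum from a general subdirect sum. Everything else is bookkeeping about disjoint unions and restrictions of partitions. There is no genuine obstacle here; the lemma is easy, and the proof is short.
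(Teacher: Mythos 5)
Your proposal is correct and follows essentially the same route as the paper: extend a distinguishing labeling of one constituent arbitrarily to the other orbit, and use the fact that in a parallel sum the only element fixing $\XX$ pointwise is the identity (since the $\YY$-part is determined by the $\XX$-part via $\phi$). The paper's proof is just a terser version of the same argument, and your careful handling of the restriction of the partition to the fixed block $\XX$ is a reasonable expansion of what the paper leaves implicit.
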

\begin{proof} 
Suppose that in the definition of $\A$ above,  $\B$ acts on $\XX$ and $\C$ act on $\YY$. Note that, by the definition of the parallel sum, the only permutation in $\A$ fixing all the points in $\XX$ is the identity. Therefore, given  is a having a distinguishing $k$-labeling $\ell_1$ of $\XX$ for the action of $\B$, one can obtain a distinguishing $k$-labeling $\ell$ of $\XX\cup\YY$ for the action of $\A$ just by assigning an arbitrary label to all points in $\YY$. A symmetrical arguments proves the claim.
\end{proof}

Now, let us consider the parallel multiple $\A=A_n^{(k)}$ of the alternating group $A_n$. As we will see below, if $k$ is large enough 
with regard to $n$, then $D(\A)=2$, which is one more confirmation of the phenomenon discussed in the introduction.

\begin{Lemma}\label{l:An}
If $\A= A_n^{(k)}$ is the parallel multiple of the alternating group $A_n$, $n\geq 3$ and $k\geq 1$, then 
$D(\A)$ is the smallest integer $d$ such that $d^k\geq n-1$. 
\end{Lemma}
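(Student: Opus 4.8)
The plan is to reduce the computation of $D(A_n^{(k)})$ to a question about colorings of a single $n$-element set, and then to settle that question directly using the structure of the stabilizers — Young subgroups — inside $A_n$.

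First I would set up the reduction. Write $\XX = \XX_1 \cup \dots \cup \XX_k$ for the $k$ disjoint copies of $\{1,\dots,n\}$ on which $\A = A_n^{(k)}$ acts, and for $j \in \{1,\dots,n\}$ let $j_i$ denote the copy of $j$ lying in $\XX_i$. A $d$-labeling $\ell$ of $\XX$ is the same data as a function $c \colon \{1,\dots,n\} \to \{1,\dots,d\}^k$, namely $c(j) = (\ell(j_1),\dots,\ell(j_k))$. Since $g^{(k)}$ sends $j_i$ to $(gj)_i$ in every copy, the permutation $g^{(k)}$ preserves $\ell$ if and only if $c(gj) = c(j)$ for all $j$, i.e. if and only if $g$ preserves the coloring $c$ of $\{1,\dots,n\}$ by the set $\{1,\dots,d\}^k$. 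Hence $\A$ has a distinguishing $d$-labeling if and only if $\{1,\dots,n\}$ admits a coloring with at most $d^k$ colors whose stabilizer in $A_n$ (the subgroup fixing each color class setwise) is trivial: from such a coloring one builds $\ell$ by composing with any injection of the color set into $\{1,\dots,d\}^k$, and conversely the profile map $c$ attached to a distinguishing $\ell$ is such a coloring, using at most $|\{1,\dots,d\}^k| = d^k$ colors.

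Second, I would compute $\mu(n)$, the least number of colors in a coloring of $\{1,\dots,n\}$ with trivial $A_n$-stabilizer, and show $\mu(n) = n-1$. If the color classes have sizes $n_1,\dots,n_m$, the $S_n$-stabilizer of the coloring is the Young subgroup $S_{n_1}\times\cdots\times S_{n_m}$, and the $A_n$-stabilizer is its intersection with $A_n$; this intersection is trivial exactly when the Young subgroup contains no non-identity even permutation. A single class of size $\ge 3$ contributes a $3$-cycle, and two classes each of size $\ge 2$ contribute a product of two disjoint transpositions, so triviality forces all $n_i\le 2$ with at most one $n_i=2$. Then $n = \sum n_i \le 2 + (m-1)$, i.e. $m \ge n-1$; and the class-size pattern $2,1,\dots,1$ (legitimate, and using fewer than $n$ colors, precisely because $n\ge 3$) attains $m = n-1$ and does have trivial $A_n$-stabilizer since its stabilizer in $S_n$ is generated by an odd transposition. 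Hence $\mu(n)=n-1$.

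Combining the two steps, $D(\A)$ is the least integer $d\ge 1$ with $d^k \ge \mu(n) = n-1$, which is exactly the claimed value. The only mildly delicate points are stating the equivalence of the first step precisely — in particular that a $d$-labeling of $k$ parallel copies records exactly a coloring of one copy by at most $d^k$ colors — and the small case-analysis in the second step; the substantive content, and the source of the number $n-1$, is the observation that a Young subgroup of $S_n$ meets $A_n$ trivially only for the class patterns $2,1,\dots,1$ and $1,\dots,1$.
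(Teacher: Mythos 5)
Your proposal is correct and follows essentially the same route as the paper: collapse the $k$ parallel copies into a single coloring of $\{1,\dots,n\}$ by $k$-tuples of labels (so at most $d^k$ colors), and then observe that such a coloring has trivial $A_n$-stabilizer exactly when every color class has size at most $2$ and at most one class has size $2$, forcing at least $n-1$ colors. Your Young-subgroup phrasing of the second step is just a more explicit version of the paper's remark that a class of size $\ge 3$ yields a $3$-cycle and two classes of size $\ge 2$ yield a product of two transpositions, so no further comparison is needed.
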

\begin{proof}  
We need to show that for $d^k\geq n-1$, $\A$ has a distinguishing $d$-labeling, and if $d^k < n-1$, then no such labeling exists for $\A$. 
 
We may assume that $A_n^{(k)}$ acts on $\XX = \XX_1 \cup \ldots \cup \XX_{k}$, where each $\XX_i$ is a copy of $\{1,2,\ldots,n\}$, and the action of $g^{(k)}$ is the same on all copies as the action of $g\in A_n$ on $\{1,\ldots,n\}$. Let  $\ell$ denote a $d$-labeling of $\XX$ with $\ell(i,j)$ denoting the label of $i$-th element in $\XX_j$. We define a labeling $\ell'$ of $\{1,2,\ldots,n\}$ by $\ell'(i) = (\ell(i,1),\ell(i,2),\ldots,\ell(i,k))$. Obviously, $\ell$ is a distinguishing $d$-labeling for $A_n^{(k)}$ if and only if $\ell'$ is distinguishing for $A_n$ on $\{1,2,\ldots,n\}$. The later holds if and only if no more than two points have the same label (otherwise there is a permutation in $A_n$ preserving the labeling). Since the labels  $\ell'(i)$ are $k$-tuples of $d$ possible values, there are $d^k$ of them. So, if $d^k < n-1$, then either more then two points must have the same label or there are two pairs with the same label, and therefore the labeling is not distinguishing. Otherwise, One may choose labels $\ell(i,j)$ so that $\ell$ is distinguishing. This proves the result.
\end{proof}
 
In general, if $\A$ is a parallel sum of permutation isomorphic components, it needs not to be a parallel multiple. As we have already explained, this is so, since permutation groups may have nonpermutation automorphisms. 
For alternating groups we have an interesting exception mentioned in the previous section.

\begin{Lemma}\label{l:Ann}
Let $\A$ be the parallel sum of components permutation isomorphic to a fixed alternating group $A_n$, such that $\A\neq A_n^{(r)}$ for any $r\geq 1$. Then, $n=6$, $\A= A_6^{(r)}||_\psi A_6^{(s)}$ for some $r,s\geq 1$ and $D(\A)=2$ with the exception of $\A= A_6||_\psi A_6 \neq A_6^{(2)}$, in which case $D(A_6||_\psi A_6)=3$
\end{Lemma}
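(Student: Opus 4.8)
The plan is to reduce the whole statement to facts about $Aut(A_n)$ versus $PAut(A_n)$ and about the subgroup structure of $A_6$, and then to settle the two distinguishing–number claims by explicit labellings plus a short case analysis (or a \texttt{GAP} check).

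\medskip\noindent\emph{Step 1: the structure.} Using associativity and commutativity of the parallel sum (discussed after Theorem~\ref{p:simple}), a parallel sum of $m$ components each permutation isomorphic to a fixed $A_n$ is, after relabelling, the group $\{(\phi_1(g),\dots,\phi_m(g)):g\in A_n\}$ for automorphisms $\phi_i$ of $A_n$, where relabelling the $i$-th summand replaces $\phi_i$ by $\pi_i\circ\phi_i$ with $\pi_i\in PAut(A_n)$. If the $\phi_i$ all lie in one coset of $PAut(A_n)$ in $Aut(A_n)$, this group is a parallel multiple $A_n^{(m)}$. For $n\neq 6$ one has $Aut(A_n)=PAut(A_n)$ for the natural action (for $n\ge 4$ because $Aut(A_n)=S_n$ is realised by conjugation inside $Sym(n)$, and trivially for $n\le 3$), so the hypothesis $\A\neq A_n^{(r)}$ forces $n=6$. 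Since $[Aut(A_6):PAut(A_6)]=2$ and $PAut(A_6)$ is normal (index $2$) in $Aut(A_6)$, I sort the $\phi_i$ into the two cosets and relabel within each coset so that those in the identity coset become the identity and those in the other become one fixed nonpermutation automorphism $\psi$; this gives $\A=A_6^{(r)}\|_\psi A_6^{(s)}$. Neither block is empty: if $r=0$, reparametrising by $h=\psi(g)$ shows $\A\cong A_6^{(s)}$, again contradicting $\A\neq A_6^{(r)}$; hence $r,s\ge 1$. (Two choices of $\psi$ differ by an element of $PAut(A_6)$, which can be absorbed into the relabelling of one block, so the group is well defined up to permutation isomorphism.)

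\medskip\noindent\emph{Step 2: the case $r+s\ge 3$.} Here $D(\A)\ge 2$ is trivial. If $\max\{r,s\}\ge 3$, then by Lemma~\ref{l:An} one block already has distinguishing number $2$, and Lemma~\ref{l:reg} gives $D(\A)=2$. This leaves $(r,s)\in\{(1,2),(2,1),(2,2)\}$, where I would exhibit explicit distinguishing $2$-labellings. Via the product-labelling device from the proof of Lemma~\ref{l:An}, a $2$-labelling of a block of $k$ copies corresponds to a partition of $\{1,\dots,6\}$ into at most $2^k$ cells, the constraint being that $g$ (resp.\ $\psi(g)$) fix each cell setwise. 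For $(2,2)$ take on each block a partition of type $(2,2,1,1)$, whose stabiliser in $A_6$ is the order-$2$ group generated by a single double transposition; since $\psi$ permutes the $45$ double transpositions bijectively, the two partitions can be chosen so that the two order-$2$ subgroups (one transported by $\psi^{-1}$) are distinct, hence meet trivially. For $(1,2)$ the single-copy block gives only a subset, so take a $3$-subset there (stabiliser of order $18$) and a $(2,2,1,1)$ partition on the other block, again choosing the double transposition outside that order-$18$ subgroup. Thus $D(\A)=2$.

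\medskip\noindent\emph{Step 3: the case $r=s=1$, i.e.\ $\A=A_6\|_\psi A_6$.} For the upper bound $D(\A)\le 3$, label each copy by a partition of type $(2,2,2)$; its stabiliser is a Klein four-subgroup of $A_6$. There are two $A_6$-classes of such subgroups — those with a common fixed point among the $6$ points (contained in a point stabiliser) and those without — and $\psi$ interchanges them, because $\psi$ sends the point stabilisers $A_5$ to the transitive copies of $A_5$, inside which every Klein four-subgroup is fixed-point-free; a short count then shows that some pair of such partitions yields trivial intersection, giving a distinguishing $3$-labelling. For the lower bound $D(\A)\ge 3$ — the crux — I must show no $2$-labelling is distinguishing. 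A $2$-labelling gives subsets $S_1,S_2\subseteq\{1,\dots,6\}$, WLOG of size $\le 3$, with stabiliser $H=(A_6)_{S_1}\cap \psi^{-1}((A_6)_{S_2})$. The elementary bound $|H\cap K|\ge |H|\,|K|/|A_6|$ forces $H\neq 1$ in every case except $|S_1|=|S_2|=3$ (where $18\cdot 18<360$). In that case $(A_6)_{S_1}$ and $\psi^{-1}((A_6)_{S_2})$ lie among the ten subgroups of $A_6$ of order $18$; two distinct such subgroups have distinct Sylow $3$-subgroups, and these intersect trivially (the $80$ order-$3$ elements of $A_6$ are partitioned by the ten Sylow $3$-subgroups), so the intersection is a $2$-group; finally a counting argument — each of the $45$ double transpositions lies in exactly two of the ten order-$18$ subgroups, and two of them cannot share two double transpositions (their product would generate a subgroup containing either a common element of order $3$, contradicting the Sylow remark, or the whole of one of them) — shows that any two distinct order-$18$ subgroups share exactly one double transposition. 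Hence the stabiliser is always nontrivial, $D(\A)\ge 3$, so $D(\A)=3$.

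\medskip\noindent\emph{Main obstacle.} The delicate part is the lower bound $D(A_6\|_\psi A_6)\ge 3$, precisely the configuration $|S_1|=|S_2|=3$: it needs the fine structure of $A_6$ (the ten order-$18$ subgroups, the partition of order-$3$ elements by Sylow $3$-subgroups, and the incidence between double transpositions and order-$18$ subgroups) together with the behaviour of the outer automorphism on conjugacy and subgroup classes; this is exactly the point where a direct \texttt{GAP} verification is a convenient alternative to the counting argument. One must also check carefully in Step~1 that the relabellings are legitimate, which rests on $PAut(A_6)$ being normal of index $2$ in $Aut(A_6)$.
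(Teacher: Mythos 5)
Your proposal is correct, and for the substantive part it takes a genuinely different route from the paper. The structural reduction (your Step~1) is essentially the paper's: both arguments rest on $A_6$ being the only alternating group with a nonpermutation automorphism and on $[Aut(A_6):PAut(A_6)]=2$, though you spell out the coset/relabelling bookkeeping more carefully than the paper does. Where you diverge is in how the distinguishing numbers are actually established: the paper verifies $D(A_6\|_\psi A_6)=3$ and the existence of regular sets in $A_6^{(2)}\|_\psi A_6$ entirely by GAP (and then invokes Lemma~\ref{l:reg} for $r+s>2$), whereas you replace the machine checks by hand arguments from the subgroup structure of $A_6$: explicit $2$-labellings built from $(2,2,1,1)$-partitions and $3$-subsets for $r+s\ge 3$, and, for the crucial lower bound $D(A_6\|_\psi A_6)\ge 3$, the order bound $|H\cap K|\ge |H||K|/|A_6|$ reducing to the $18\times 18$ case, followed by the Sylow-$3$ and double-transposition incidence count showing two distinct order-$18$ subgroups always meet in a group of order exactly $2$. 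I checked these counts and they are sound (in particular each double transposition lies in exactly two of the ten $3$-subset stabilizers, so the $45$ involutions biject with the $45$ pairs of such subgroups); note also that your lower-bound argument needs no property of $\psi$ at all, consistently with $D(A_6^{(2)})=3$. What your approach buys is a verifiable, computer-free proof of the one genuinely delicate fact in the lemma; what it costs is length and a few asserted steps that should be written out: that the ten $3$-subset stabilizers are \emph{all} the order-$18$ subgroups of $A_6$ (via the Sylow normalizer $3^2{:}4$ having a unique index-$2$ subgroup over $P$), and the "short count" in your upper bound for $D\le 3$ (which in fact goes through whether or not $\psi$ swaps the two classes of Klein four-subgroups, though your claim that it does is correct, since it swaps the two classes of $S_4$'s and hence their normal $V_4$'s).
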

 
\begin{proof}
It is well known \cite{CL} that the only alternating group $A_n$ having a nonpermutation automorphism is $A_6$ and that the index $[Aut(A_6) : PAut(A_6)]=2$, which means that up to permutation automorphisms there is only one nonpermutation automorphism in $A_6$. Hence, if $\A \neq A_n^{(k)}$, then $ \A=A_6^{(r)}||_\psi A_6^{(s)}$.
 
First, consider the case of two components $\A= A_6 ||_\psi A_6$.
Using GAP, we find a nonpermutation automorphisms $\psi$ of $A_6$ given by the images of generators of $A_6$:
$$\big( (2,3)(4,5) \big)\psi = (2,5)(3,4), \hbox{\rm\ and \ } \big( (1,2,3,4)(5,6)\big)\psi = (1,2,3,4)(5,6). $$
(Note that this mapping cannot be obtained by permuting points). This may be used to form the group $G=A_6||_\psi A_6$ as one generated by
$$g= (2,3)(4,5)(2',5')(13',14')
\hbox{\rm\ and }h=(1,2,3,4)(5,6)(1',2',3',4')(5',6')$$
on the set $\XX=\XX_1\cup\XX_2$ with $\XX_1 =\{1,\ldots,6\}$ and $\XX_2=\{1',\ldots,6'\}$.

One can check, still using GAP, that $A_6 ||_\psi A_6$ has no regular set, and  $D(A_6 ||_\psi A_6)=3$. In turn, the group $\A' = A_6^{(2)} ||_\psi A_6$, obtained by applying the same nonpermutation automorphism, has regular sets of all sizes from $4$ to $14$. It follows that $D(\A')=2$, and using Lemma~\ref{l:reg}, $D(A_6^{(r)}||_\psi A_6^{(s)})=2$ for all $r+s>2$.  
\end{proof}

Now, we will combine the above lemmas with what is known on primitive groups. In \cite{ser}, Seress lists all primitive groups that have no regular set (cf. \cite[Theorem~2.2]{SV}). Using this list one can distinguish all simple groups in primitive action that have no regular set. In the list below the first entry denotes the degree of the action, and the second---the group itself. If the group is abstractly isomorphic to one of $A_n$ or another group in the list with a different name, then this is indicated. There are no other isomorphisms between the groups in the list except those indicated. (This may be inferred from the classification of the finite simple groups, using, e.g., \cite[Appendix~A]{DM}). Note that all groups in the list appears in their natural actions with the exception of $L_2(11)$ which acts here on $11$ points (rather than $12$).
\bigskip
 
${\mathcal L} = \{(6, L_2(5)\!\cong\! A_5)$,
$(7, L_3(2)\!\cong\! L_2(7))$,
$(8, L_2(7)\!\cong\! L_3(2))$,
$(9, L_2(8))$,
 
\hspace*{8mm}$(10, L_2(9)\!\cong\! A_6)$,
$(11, L_2(11))$,
$(11, M_{11})$,
$(12, M_{11})$,
$(12, M_{12})$,

\hspace*{8mm}$(13, L_3(3))$,
$(15, L_4(2) \!\cong\! A_8)$,
$(22, M_{22})$,
$(23, M_{23})$,
$(24, M_{24})\}$.
\bigskip
 
Performing suitable computation one may check that although these groups have no regular sets, their sums have. First we establish the following.

\begin{Lemma}\label{l:h2}
For each group $\B$ in the list $\mathcal L$,
$\B^{(2)}$ has a regular set.
\end{Lemma}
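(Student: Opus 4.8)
The plan is to reduce the statement to a small finite computation about set-stabilizers in $\B$. The group $\B^{(2)}$ acts on the disjoint union $\XX_1\cup\XX_2$ of two copies of the domain $\XX$ of $\B$, with $g^{(2)}$ acting as $g$ on each copy; so for a subset $T$ of $\XX_1\cup\XX_2$, writing $S_i=T\cap\XX_i$ and identifying $\XX_1,\XX_2$ with $\XX$, the set-stabilizer of $T$ in $\B^{(2)}$ is $\{g^{(2)}:g\in\B_{S_1}\cap\B_{S_2}\}$, where $\B_S$ denotes the setwise stabilizer of $S$ in $\B$. Hence $\B^{(2)}$ has a regular set if and only if $\XX$ carries two subsets $S_1,S_2$ with $\B_{S_1}\cap\B_{S_2}=1$, and the lemma amounts to producing such a pair for each of the finitely many groups listed in $\mathcal L$.

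To verify this group by group in GAP efficiently, I would first look for a subset $S_1$ whose stabilizer $U=\B_{S_1}$ is as small, or as simply structured, as possible --- ideally of prime order, but in any case a small group. It then suffices to pick $S_2$ with $U\cap\B_{S_2}=1$, that is, a regular set for the (small) permutation group $U$ acting on $\XX$. For such $U$ this is easy: when $|U|$ is prime a single point taken from any nontrivial $U$-orbit already has trivial stabilizer in $U$, and more generally the estimate $\sum_{u\in U\setminus\{1\}}2^{c_\XX(u)}<2^{|\XX|}$, with $c_\XX(u)$ the number of cycles of $u$ on $\XX$, forces a regular set for $U$ and holds comfortably once $U$ is small. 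It is then enough to record one witnessing pair $(S_1,S_2)$ for each $\B$, from which $\B_{S_1}\cap\B_{S_2}=1$ can be checked directly; the groups of smallest degree, such as $L_2(5)$ on $6$ points or $L_3(2)$ on $7$, can even be dealt with by hand.

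The main obstacle is that there seems to be no clean uniform argument covering the whole list: the members of $\mathcal L$ are exactly the primitive groups without a regular set, so they are borderline cases. In particular the Motion-Lemma-style sufficient condition that the same counting argument yields when applied directly to $\B^{(2)}$, namely $\mu(\B)>\log_2|\B|$ with $\mu(\B)$ the minimal degree, fails for several entries: for instance $L_2(5)$ acting on $6$ points has $\mu(\B)=4$ while $|\B|=60$. Consequently the heaviest cases are the large Mathieu groups $M_{23}$ and $M_{24}$ (where $\B^{(2)}$ has degree $46$, respectively $48$): there one really has to let GAP exhibit a subset with a sufficiently small stabilizer before the second subset can be chosen, and I would record the resulting witnessing subsets in the proof.
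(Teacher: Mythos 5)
Your proposal is correct and in substance identical to the paper's proof: the paper also establishes the lemma by a finite GAP computation, recording for each $\B\in\mathcal L$ explicit generators of $\B^{(2)}$ and an explicit regular set, which under your identification of subsets of $\XX_1\cup\XX_2$ with pairs $(S_1,S_2)$ is exactly a witnessing pair with $\B_{S_1}\cap\B_{S_2}=1$. Your two-stage search (first a subset with small set-stabilizer $U$, then a regular set for $U$) is a reasonable way to organize that computation but does not change the nature of the argument.
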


\begin{proof}
This can be easily checked using GAP or other systems for computation in permutation groups. 
To provide easy verification for the reader we give some details of computations.
Let $n$ be the degree of $\B$, $\XX=\{1,2,\ldots,n\}$, and $\YY= \{1',2',\ldots,n'\}$. 
 
As an example we take the group
$\B=L_2(5)$ acting on $n=6$ elements. To point out a regular set for $\B^{(2)}$  we need to define the underlying set $\XX$ and generating permutations for $\B^{(2)}$. In each case for $\XX$ we take $\XX=\{1,2,\dots,n\} \cup\{1',2',\ldots,n'\}$. Then, we find a set of generators for $\B$ (one may use, for example, the GAP listing of primitive groups of degree $n$). In the considered example we get
$\gg = (1,2,5)(3,4,6)$ and $\hh =(3,5)(4,6)$.
We form permutations $\gg^{(2)}$ and $\hh^{(2)}$ obtaining
\begin{align*}
\gg^{(2)} &= (1,2,5)(3,4,6)(1',2',5')(3',4','6), \\
\hh^{(2)} &= (3,5)(4,6)(3',5')(4',6').
\end{align*}
Clearly, the above permutations generate $\B^{(2)}$. Now the reader can check that, for instance, the set $S=\{1,2,3,2',4',6'\}$ has the trivial stabilizer.
 
%%%TABLE 1
 
\begin{table}[ht!]
\begin{center}
\caption{}
\label{t:1}
 
\small
\begin{tabular}{ |l|l|l|l| }
\hline
$n$ &$\B$ & generators $\gg$ and $\hh$ of $\B$ & regular set $S$ in $\B^{(2)}$ \\ \hline
$6$ & $L_2(5)$ & {$(1,2,5)(3,4,6),$} & $\{1,2,3,2',4',6'\}$ \\
& & {$(3,5)(4,6)$} & \\ \hline
$7$ & $L_3(2)$ & {$(1,4)(6,7),$} & $\{1,2,3,3',5'\}$ \\
& & {$ (1,3,2)(4,7,5)$} & \\ \hline
$8$ & $L_2(7)$ & {$ (3,7,8)(4,6,5),$} & $\{1,2,2',7'\}$ \\ & & {$ (1,4,2,5,7,8,6)$} & \\ \hline
$9$ & $L_2(8)$ & {$ (1,5,4,2,8,3,6),$} & $\{1,2,3,4,2',3',4',5'\}$ \\ & & {$ (1,8,6,2,7,3,9)$} & \\ \hline
$10$ & $L_2(9)$ & {$ (1,9,6,3,8)(2,10,7,5,4),$} & $\{1,2,3',5'\}$\\
& & {$ (1,10,6,2,5)(3,4,9,8,7)$} & \\ \hline
$11$ & $L_2(11)$ & {$ (1,5)(2,4)(3,10)(7,11),$} & $\{1,2,3',5',6'\}$\\
& & {$ (3,11,5)(4,7,9)(6,8,10)$} & \\ \hline
$11$ & $M_{11}$ & {$ (1,2,3,4,5,6,7,8,9,10,11)
,$} &
$\{1,2,3,1',5',7'\}$\\
& & {$
(3,7,11,8)(4,10,5,6)
$} & \\ \hline
 
$12$ & $M_{11}$ & {$ (1,12)(2,10,5,7)(3,8)(4,6,11,9)
,$} &
$\{1,2,3,4,1',4',7'\}$\\
& & {$
(1,3)(2,7)(8,11)(9,10)
$} & \\ \hline
$12$ & $M_{12}$ & {$ (1,4,12,6)(2,7,5,9,8,10,3,11),$} &
$\{1,2,3,4,11,1',2',5',$\\%zb
& & {$
(1,12)(2,6,4,9,7,8,11,3)
$} & $8',9'\}$ \\ %zb
\hline
$13$ & $L_3(3)$ & {$ (1,10,4)(6,9,7)(8,12,13),$} &
$\{1,2,3,4,1',7',9'\}$\\%zb
& & {$
(1,3,2)(4,9,5)(7,8,12)(10,13,11)
$} & \\ %zb
\hline
 
$15$ & $L_4(2)$ & {$ (1,9,5,14,13,2,6)(3,15,4,7,8,12,11),$} &
$\{1,2,3,4,5,6,9,1',$\\%zb
& & {$
(1,3,2)(4,8,12)(5,11,14)(6,9,15)
$} & $7',8',11',15'\}$ \\ %zb
\hline
 
$22$ & $M_{22}$ & {$ (1,13,11,17)(2,7)(3,22,12,21)(4,18,16,10)$} & $\{1,2,3,4,7,9,1',3',8'\}$\\ %zbior
& & {$(6,20,19,14)(9,15),$} & \\ %zbior
& & {$ (1,6,12,11,14,5,22)(2,19,16,9,13,21,8)$} & \\
& & {$ (3,17,18,15,7,4,10)$} & \\ \hline
 
$23$ & $M_{23}$ & {$ (1,2,3,4,5,6,7,8,9,10,11,12,13,14,15,16,$} & $\{1,2,3,4,9,12,1',10',$\\ %zbior
& & {$ 17,18,19,20,21,22,23),$} & $17'\}$\\ %zbior
& & {$ (3,17,10,7,9)(4,13,14,19,5)(8,18,11,12,23)$} & \\
& & {$ (15,20,22,21,16)$} & \\ \hline
 
$24$ & $M_{24}$ & {$ (1,5)(2,14,7,12)(3,21)(4,17,16,11)$} & $\{1,2,3,5,11,13,19,1'$\\ %zbior
& & {$ (6,20,23,22)(9,10,15,13),$} & $4',14',18',24'\}$\\ %zbior
& & {$ (1,19,15,8,20,23,24,9,14,11,5,10,22,13,2)$} & \\
& & {$ (3,6,4)(7,16,12,17,18)$} & \\ \hline
\end{tabular}
\end{center}
\end{table}

The proof in other cases is the same. The only change is in the choice of permutations $\gg$ and $\hh$ generating $\B$, and the regular set $y$. These details are given in Table~\ref{t:1}. \end{proof}

We also need to consider subdirect sums of elements from $\mathcal L$ that arise by using a nonpermutation automorphism. 
It is easy to check that the groups in $\mathcal L$ having nonpermutation automorphisms are the following:
$${\mathcal L}^* =\{ L_3(2), L_2(11), M_{12}, L_3(3), L_4(2) \}.$$
Moreover, one may also check that the index $[Aut(G):PAut(G)]=2$ in each of these cases. We use this to prove the following.

\begin{Lemma}\label{l:hh}
For each group $\B$ in the list $\mathcal L$, and each automorphism $\psi$  of $\B$, 
if the parallel sum $\A=\B||_\psi \B$ is different from  $\B^{(2)}$, then $\A$ has a regular set.
\end{Lemma}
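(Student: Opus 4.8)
The plan is to reduce the statement to a bounded, explicit GAP computation, handled uniformly as in the proof of Lemma~\ref{l:h2}. If $\psi$ is a permutation automorphism of $\B$, then $\A=\B||_\psi\B$ is by our convention equal to $\B^{(2)}$, so the hypothesis ``$\A$ different from $\B^{(2)}$'' fails and there is nothing to prove. Thus we may assume $\psi$ is a nonpermutation automorphism of $\B$. But an entry $\B$ of $\mathcal L$ admits a nonpermutation automorphism only when $\B\in\mathcal L^*=\{L_3(2),L_2(11),M_{12},L_3(3),L_4(2)\}$; for every other $\B$ in $\mathcal L$ all automorphisms are permutation automorphisms, so every parallel sum $\B||_\psi\B$ equals $\B^{(2)}$ and the claim is again vacuous. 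Hence it remains to treat the five groups in $\mathcal L^*$.

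Next I would use that $[Aut(\B):PAut(\B)]=2$ for each $\B\in\mathcal L^*$. If $\psi_1,\psi_2$ are two nonpermutation automorphisms of $\B$, they lie in the same (unique) nontrivial coset of $PAut(\B)$ in $Aut(\B)$, so $\psi_2=\psi_1\sigma$ for some $\sigma\in PAut(\B)$, realized by conjugation by a permutation $\pi$ of the point set of $\B$ (with $\pi$ normalizing $\B$). Relabelling the second copy of the point set by $\pi$ sends $\{(\gg,\gg^{\psi_1}):\gg\in\B\}$ to $\{(\gg,\gg^{\psi_2}):\gg\in\B\}$, so $\B||_{\psi_1}\B$ and $\B||_{\psi_2}\B$ are permutation isomorphic. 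Consequently it suffices to verify, for one fixed nonpermutation automorphism $\psi$ of each of the five groups in $\mathcal L^*$, that $\B||_\psi\B$ has a regular set.

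Finally I would carry this out computationally, exactly in the style of Lemma~\ref{l:h2}. For each $\B\in\mathcal L^*$ take the generators $\gg,\hh$ already used in Table~\ref{t:1}, so that $\B$ acts on $\{1,\dots,n\}$; compute in GAP an automorphism $\psi$ of $\B$ lying outside $PAut(\B)$ (which GAP certifies directly from the automorphism group together with the subgroup of permutation automorphisms); form the generators $(\gg,\gg^{\psi})$ and $(\hh,\hh^{\psi})$ of $\A=\B||_\psi\B$ on $\{1,\dots,n\}\cup\{1',\dots,n'\}$; and exhibit a subset $S$ of this set whose setwise stabiliser in $\A$ is trivial. The images $\gg^{\psi},\hh^{\psi}$ and the sets $S$ would be collected in a table analogous to Table~\ref{t:1}, so that the reader can check each $S$ is a regular set with any permutation-group package. (For $\B=L_3(2)$ acting on $7$ points this $\psi$ is the Fano-plane duality; for $L_2(11)$, $M_{12}$, $L_3(3)$, $L_4(2)$ it is the respective exceptional outer automorphism that swaps the two inequivalent actions, hence is not induced by permuting the points.)

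The main point requiring care is not the stabiliser computation itself but the certification that the chosen $\psi$ is genuinely a \emph{nonpermutation} automorphism, i.e.\ that $\A\neq\B^{(2)}$; since $[Aut(\B):PAut(\B)]=2$, one only needs to produce a single representative of the nontrivial coset, which GAP supplies. After that, the remaining work — forming the five groups $\B||_\psi\B$ and checking triviality of $\mathrm{Stab}_\A(S)$ — is routine, the largest instances being $M_{12}$ (degree $12$) and $L_4(2)$ (degree $15$), which are still comfortably within reach of GAP.
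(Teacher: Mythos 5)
Your proposal is correct and follows essentially the same route as the paper: reduce to the five groups in $\mathcal L^*$ that admit nonpermutation automorphisms, use $[Aut(\B):PAut(\B)]=2$ to fix a single representative $\psi$ in each case, and then exhibit an explicit regular set for $\B||_\psi\B$ by a GAP computation. You spell out the reduction (vacuity for permutation automorphisms, and permutation isomorphism of the sums arising from automorphisms in the same $PAut$-coset) in somewhat more detail than the paper, which simply refers to "the remarks above," but the substance is identical.
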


\begin{proof}
In view of the remarks above we may restrict to groups $\B\in {\mathcal L}^*$ and to one nonpermutation automorphism $\psi$ in each case. 
 
As in the proof of Lemma~\ref{l:Ann}, first we find a nonpermutation automorphism of $\B$. For $\B=L_3(2)$, using GAP, one can find a nonpermutation
automorphism $\psi$ given by the images of generators 
$$\big( (1,2)(5,7)\big)\psi = (1, 2)(3, 6), \hbox{\rm\ and \ } \big( (2,3,4,7)(5,6)\big)\psi =
(2, 3, 4, 7)(5, 6).$$
Hence, the group $\A=\B||_\psi \B$ on the set $\XX=\{1,\ldots,7\} \cup \{1',\ldots, 7'\}$ is generated by permutations
$$ \gg =(1,2)(5,7)(1',2')(3',6') \hbox{\rm\ and }
\hh= (2,3,4,7)(5,6)(2',3',4',7')(5',6').$$

It is easy to check that $\B$ has regular sets of all sizes from $4$ to $10$, and one of them is, for example, $\{1,5,2',3'\}$.

For other groups in ${\mathcal L}^*$ the constructions are the same, and the only difference is the nonpermutation automorphism used. In Table~\ref{t:1b}  generators of $H||_\psi H$ and examples of regular sets are given. Generators are given in the form as above, allowing to decode nonpermutation automorphisms used.
\end{proof}
 
%TABLE 2
\begin{table}[hb!]
\begin{center}
\caption{}
\label{t:1b}
 
\small
\begin{tabular}{ |l|l|l|l| }
\hline
$n$ &$\B$ & generators $\gg$ and $\hh$ of $\B||_\psi\B$ & regular set  \\ \hline
$7$  & $L_3(2)$ & {$(1,2)(5,7)(1',2')(3',6'),$} & $\{1,5,2',3'\}$ \\
& & {$(2,3,4,7)(5,6)(2',3',4',7')(5',6')$} & \\ \hline
%OK
 
$11$ & $L_2(11)$ & {$(1, 3)(2, 7)(5, 9)(6, 11)
(1', 4')(2', 3')(5', 10')(9', 11'),$} & $\{1,2,3,3',5'\}$ \\
& & $ (3, 5, 11)(4, 9, 7)(6, 10, 8)(3', 5', 11')(4', 9', 7')(6', 10', 8')$ & \\ \hline
%OK

$12$ & $M_{12}$ & $ (1,3,5,7,2,4,8,9)(6,10,11,12) (1',3')(5',4',6',7',8',$ & $\{1,2,3,7,9,1',3',$\\
&& $10',9',12'),$ & $\;\; 5',9',12'\}$\\  & & $ (1,7,8,3,9,6,4,11,10,12,2)
(3',2',1',7',11',9',12',5',$ & \\
&& $ 6',4',10')$ & \\ \hline
%OK  1,2,3,7,9,1',3',5',9',12'
 
$13$   & $L_3(3)$ & $ (3,5,11)(6,7,9)(8,12,13)
(3', 8', 7')(5', 12', 9')(6', 11', $ & $\{1,2,3,1',2',3'\}$\\
& & $13'),$ & \\
 & & $ (1,13,7)(2,10,6)(3,5,12)(4,11,9)
(1', 13', 7')(2', 10', 6')$&\\
&&$ (3',5', 12')(4', 11', 9')$ & \\ \hline
%OK

$15$   & $L_4(2)$ & $ (1, 9, 5, 14, 13, 2, 6)(3, 15, 4, 7, 8, 12, 11)
(1', 4', 2', 14', 13',$& $\{1,3,5,3',9',13'\}$\\
&&$7', 8')(3', 10', 15', 9', 5', 6', 12'),$  &\\ & & $ (1, 3, 2)(4, 8, 12)(5, 11, 14)(6, 9, 15)(7, 10, 13)
(1', 2', 3')$&\\
&&$(4', 14', 10')(5', 12', 9')(6', 13', 11')(7', 15', 8')$ & \\ \hline
 
\end{tabular}
\end{center}
\end{table}

In the next lemma we consider the last case needed to prove our main result.
 
\begin{Lemma}\label{l:pary}
For each pair of groups $H \cong K$ in the list $\mathcal L$ that are different as permutation groups, the group $H ||_\phi K$ has a regular set.
\end{Lemma}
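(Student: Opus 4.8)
The plan is to reduce the statement to two explicit cases and then, exactly as in the proofs of Lemmas~\ref{l:h2} and~\ref{l:hh}, produce a regular set for each by a direct computation. Inspecting the list $\mathcal L$, the only abstract groups that occur in it under two genuinely different permutation actions are $PSL(2,7)$, appearing both as $L_3(2)$ on $7$ points and as $L_2(7)$ on $8$ points, and the Mathieu group $M_{11}$, appearing on $11$ and on $12$ points (all the alternating-group coincidences $L_2(5)\cong A_5$, $L_2(9)\cong A_6$, $L_4(2)\cong A_8$ involve a single action each, and $M_{22},M_{23},M_{24}$ and the remaining linear groups each occur once). So there are exactly two pairs $H\cong K$ to treat, giving parallel sums on $7+8=15$ and on $11+12=23$ points.

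Next I would check that in each of these two cases the parallel sum $H||_\phi K$ is, up to permutation isomorphism, independent of the chosen isomorphism $\phi$, so that a single computation settles the case. The key point is that replacing $\phi$ by $\alpha\circ\phi$ with $\alpha\in Aut(K)$ produces a permutation-isomorphic sum as soon as $\alpha$ is a permutation automorphism of the given action of $K$: conjugating the second point set by the permutation realizing $\alpha$ carries one sum to the other. Hence it suffices to know that $PAut(K)=Aut(K)$ for the relevant action. For the Mathieu pair this is immediate, since $Out(M_{11})=1$ forces every automorphism of $M_{11}$ to be inner, and inner automorphisms are always permutation automorphisms. For the pair $(L_3(2)$ on $7,\ L_2(7)$ on $8)$ it suffices to take $K=L_2(7)$ on $8$ points: here $N_{S_8}(L_2(7))=PGL(2,7)$, which embeds into $Aut(PSL(2,7))$ because the centralizer of this primitive non-regular group in $S_8$ is trivial, and a comparison of orders ($|PGL(2,7)|=336=|Aut(PSL(2,7))|$) shows the embedding is onto; thus every automorphism of $L_2(7)$ is realized on the $8$-point set.

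Finally I would carry out the same GAP search as in Lemma~\ref{l:h2}: choose generators $g,h$ of $H$, compute their images under a fixed isomorphism $\phi$ onto $K$, form the generators of $H||_\phi K$ on the disjoint union of the two point sets, and exhibit a subset with trivial set-stabilizer, recording the data in a table in the format of Tables~\ref{t:1} and~\ref{t:1b}. The only genuine computational work is locating a regular set, which is routine at these small degrees — and, as already seen in the $\B^{(2)}$ and $\B||_\psi\B$ cases, regular sets of many sizes tend to exist — so I expect the main subtlety to be the bookkeeping of the previous paragraph (confirming that the isomorphism class of $H||_\phi K$ does not depend on $\phi$), rather than the GAP verification itself.
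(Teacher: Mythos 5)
Your method is the same as the paper's (argue that $H||_\phi K$ is independent of $\phi$ because one of the two constituents realizes all of $Aut$ as permutation automorphisms, then exhibit a regular set by a GAP search), and the two uniqueness arguments you give — $Out(M_{11})=1$, and $N_{S_8}(L_2(7))=PGL(2,7)\cong Aut(PSL(2,7))$ acting on the $8$ points — are correct and in fact more detailed than the paper's one-line justification. The problem is your case enumeration. You reduce to two pairs, $(L_3(2)\hbox{ on }7,\ L_2(7)\hbox{ on }8)$ and $(M_{11}\hbox{ on }11,\ M_{11}\hbox{ on }12)$, which is what the letter of the statement gives; but the paper's proof treats \emph{five} pairs, the other three being $(A_5\hbox{ on }5,\ L_2(5)\hbox{ on }6)$, $(A_6\hbox{ on }6,\ L_2(9)\hbox{ on }10)$ and $(A_8\hbox{ on }8,\ L_4(2)\hbox{ on }15)$. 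These extra pairs arise because in the main theorem the transitive constituents that survive the reduction are ``some $A_n$ in its natural action \emph{or} a member of $\mathcal L$'': the natural alternating groups are not in Seress's list (which by convention excludes groups containing $A_n$) but they too have no regular set, so a parallel sum such as $A_5||_\phi L_2(5)$ cannot be dispatched by Lemma~\ref{l:reg} and must be handled here. Your observation that ``the alternating-group coincidences involve a single action each'' is exactly where these cases get dropped.

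So, as a proof of the lemma read literally your plan is complete once the two GAP computations are done, but it does not prove what the paper needs (and what the paper's own proof of this lemma actually establishes); the three omitted mixed pairs would leave the intransitive case of the main theorem open. The fix is to run the identical argument for the three additional sums, checking in each case that one constituent has $PAut=Aut$ (for $A_5$ and $A_8$ all automorphisms are realized by $S_n$ on the natural points; for the $A_6\cong L_2(9)$ pair one must note that the two actions together still pin down a unique sum since $PAut=Aut$ holds for at least one of the two constituents) and then locating a regular set, exactly as in Table~\ref{t:2} of the paper.
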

 
\begin{proof}
Looking at the list $\mathcal L$
we see that there are exactly five possibilities for $H$ and~$K$:
 
\renewcommand{\labelenumi}{(\roman{enumi})}
\begin{enumerate}
\item $n=5, 6$ with $A_5 \cong L_2(5)$,
\item {$n=7, 8$ with $L_3(2) \cong L_2(7)$,}%%%%
\item $n=6, 10$ with $A_6 \cong L_2(9)$,
\item {$n=11, 12$ with two actions of $M_{11}$,}%%%%
\item $n=8, 15$ with $A_8 \cong L_4(2)$,
\end{enumerate}
 
Again, for each pair calculations are similar, so we comment only one example.
 
As the generators of $\A=L_2(5)||_\phi A_5$ (formed similarly as the proof of the previous lemma) we take $ (1,3,4)(2,5,6)(8,9,11)$ and
$(1,2)(3,4)(7,8)(9,10)$.
 
We need to note that the group $\A=L_2(5)||_\phi A_5$ is unique up to permutation isomorphism. This is because $A_5$ has only permutation automorphism (even if $L_2(5)$ has a nonpermutation automorphism). The situation is similar in the remaining cases (one of the groups has only permutation automorphisms), so we have only one permutation group of the form $\B||_\phi \C$ in each case. 
 
Using GAP, it is not difficult to find a regular set in $L_2(5)||A_5$ (in this case we found that $\{1,3,7,9\}$ is regular).
 
In Table~\ref{t:2}, we list the degree $n$ in the form the sum of the degrees of transitive components $\B$ and $\C$, the generators of the sum $\B||\C$, and an example of a regular set in $\B||\C$. This allows to decode the isomorphism $\psi$ and to check easily that the set pointed out is regular.
\end{proof}
 
%%TABLE 3
 
\begin{table}[ht]
\begin{center}
\caption{}
\label{t:2}
 
\small
\begin{tabular}{ |l|l|l|l|}
\hline
$n$ &group & generators of $H||_\phi  K$ & regular set \\ \hline
 
 $6+5$ & $L_2(5)||_\phi A_5$ &  (1,3,4)(2,5,6)(8,9,11), & \{1,3,7,9\} \\
&& (1,2)(3,4)(7,8)(9,10) &\\ \hline
 
$8+7$& $L_2(7)||_\phi  L_3(2)$ &
(1,6,5)(2,3,7)(9,11,10)(12,15,13, & 
\{1,3,5,10,12,14\} \\
&& (1,4)(2,7)(3,5)(6,8)(9,12)(14,15) &\\ \hline
 
$10+6$&$L_2(9)||_\phi A_6$ & (1,5,3,9,6)(2,7,8,4,10)(11,12,13,14,15), & \{1,3,13,15\} \\
&& (1,5,3,9,6)(2,7,8,4,10)(11,13,14,15,16) & \\ \hline
 
$12+11$&$M_{11}(12)||_\phi M_{11}$ & (1,12)(2,10,5,7)(3,8)(4,6,11,9)(13,21,
&  \{1,3,7,9,11,14,18\} \\
 
&&
17,19)(16,20,23,18), &  \\
 
&& (1,3)(2,7)(8,11)(9,10)(14,16)(15,18)(19, & \\
 
&& 22)(21,23)& \\ \hline

$15+8$ & $L_4(2)||_\phi A_8$ & (1,9,5,14,13,2,6)(3,15,4,7,8,12,11)(16, & \{1,4,7,8,18,20,23\}\\
 
&& 17,18,19,20,21,22), &  \\
%%% bez 3 tez jest regularny, moze GAP złapie
&& (1,3,2)(4,8,12)(5,11,14)(6,9,15)(7,10, & \\
&& 13)(21,22,23) & \\\hline
 
\end{tabular}
\end{center}
\end{table}

\section{Results}
 
Now, we are ready to prove our main result. Below the well-known notation for abstract groups is used for permutation groups obtained from the corresponding abstract group in its natural action.  An exception is $(L_2(11),11)$ meaning the  projective special linear group $L_2(11)$ acting on $11$ points. $A_6 ||_\psi A_6$ denotes the exceptional intransitive group described in Lemma~\ref{l:Ann}.

\begin{Theorem}
Let $\A$ be a simple permutation group with no fixed points. Then, $D(\A)=2$ except for the following cases:
\begin{enumerate}
\item If $\A=A_n^{(k)}$,  $n>4$, then $D(\A)$ is the smallest integer $d$ such that $d^k\geq n-1$;
\item If $\A \in \{L_3(2), M_{11}, M_{12}\}$, then $D(\A)=4$.
 
\item If $\A \in \{L_2(5),L_2(7),L_2(8),(A_6,10), (L_2(11), 11), (M_{11}, 12), L_3(3), (A_8, 15), $
 
$M_{22}, M_{23}, M_{24},
A_6 ||_\psi A_6\}$ then $D(\A)=3$.
\end{enumerate}
\end{Theorem}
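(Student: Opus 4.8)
The plan is to split the analysis according to whether $\A$ is transitive or intransitive, and in the intransitive case to exploit Theorem~\ref{p:simple}, which tells us that $\A=\B||_\phi\C$ is a parallel sum of two permutation groups abstractly isomorphic to $\A$. First I would dispose of the transitive case: a transitive simple permutation group is either primitive, or has a nontrivial block system. If it is primitive, then by the classification of primitive groups without regular sets (Seress \cite{ser}, cf.\ \cite{SV}), the only simple primitive groups with $D(\A)\neq 2$ are exactly the members of the list $\mathcal L$ together with the alternating and symmetric groups in their natural actions; since $\A$ is simple we discard $S_n$, and $A_n$ in its natural degree-$n$ action is the case $k=1$ of item~(1) (where $d$ is the least integer with $d\geq n-1$, i.e.\ $D(A_n)=n-1$). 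For each group in $\mathcal L$ one reads off $D$ directly by a (small) GAP computation: the tables show they all have a regular $2$-set after taking $\B^{(2)}$ but not before, so one checks by hand/GAP that $D=3$ for most entries and $D=4$ for $L_3(2)$, $M_{11}$ (degree $11$) and $M_{12}$. A transitive imprimitive simple group is, up to the classification, still on a short known list and again handled computationally; in fact no new exceptions arise there, so $D=2$.

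Next I would treat the intransitive case using the parallel-sum decomposition. Decomposing each parallel summand step by step into transitive components, $\A$ becomes a parallel sum $\A=\T_1||\T_2||\cdots||\T_m$ with $m\geq 2$ of transitive components, all abstractly isomorphic to the simple group $\A$. By Lemma~\ref{l:reg}, $D(\A)\le\min_i D(\T_i)$, so if any component $\T_i$ already has $D(\T_i)=2$ — which by the transitive analysis happens unless $\T_i$ is $A_n$ in natural action or a member of $\mathcal L$ — we immediately get $D(\A)=2$ and we are done. Hence we may assume every transitive component is either a natural $A_n$ or a group from $\mathcal L$. If all components are copies of $A_n$, the parallel sum is either $A_n^{(k)}$, handled by Lemma~\ref{l:An} (item~(1)), or the exceptional $n=6$ situation handled by Lemma~\ref{l:Ann}, giving item~(3)'s entry $A_6||_\psi A_6$ with $D=3$ and $D=2$ otherwise. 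If all components lie in $\mathcal L$ (possibly mixing the two actions of $M_{11}$ or the $A_6\cong L_2(9)$ / $A_8\cong L_4(2)$ coincidences), then Lemmas~\ref{l:h2}, \ref{l:hh}, \ref{l:pary} show that already a parallel sum of two such components has a regular set, hence $D=2$; and if some component is $A_n$ while another is in $\mathcal L$, the only abstract isomorphisms available are exactly those listed in Lemma~\ref{l:pary} (items (i),(iii),(v)), so the same lemma applies and $D=2$. The one remaining sub-case is a single $A_n$ paired with further $A_n$-components mixed with an $\mathcal L$-group isomorphic to it: this forces $n\in\{5,6,8\}$ and is subsumed in Lemma~\ref{l:pary} together with Lemma~\ref{l:reg}.

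Assembling these pieces: whenever $\A$ is intransitive with at least two transitive components, either some component is $2$-distinguishable and Lemma~\ref{l:reg} gives $D(\A)=2$, or else every component is exceptional and the explicit computations of Lemmas~\ref{l:h2}--\ref{l:pary} (plus Lemma~\ref{l:Ann}) produce a regular set in a two-component sub-sum, again yielding $D(\A)=2$ via Lemma~\ref{l:reg} — the sole genuine intransitive exception being $A_6||_\psi A_6$. Therefore the only groups with $D(\A)>2$ are the transitive exceptions already catalogued, which are precisely the three families in the statement.

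The main obstacle I anticipate is ensuring the bookkeeping of \emph{which} transitive simple groups can legitimately occur as components is airtight — in particular, correctly enumerating all abstract isomorphisms among $\{A_n\} \cup \mathcal L$ (so that no unexpected parallel sum of two distinct exceptional actions is overlooked) and verifying that no transitive \emph{imprimitive} simple action escapes the analysis with $D>2$. This is exactly where the classification of finite simple groups and the carefully-tabulated GAP computations in Lemmas~\ref{l:h2}--\ref{l:pary} are doing the real work; once one trusts those tables, the logical skeleton above closes immediately.
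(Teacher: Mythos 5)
Your overall architecture --- primitive case via Seress's list \cite{ser}, intransitive case via the parallel-sum decomposition of Theorem~\ref{p:simple} followed by Lemmas~\ref{l:reg}, \ref{l:An}, \ref{l:Ann}, \ref{l:h2}, \ref{l:hh} and \ref{l:pary} --- is exactly the paper's proof, and that part of your argument is sound, including the enumeration of the abstract isomorphisms among $\{A_n\}\cup\mathcal L$ and the reduction of every intransitive case to a two-component sub-sum. The one genuine gap is the transitive \emph{imprimitive} case. You dismiss it by saying such groups are ``still on a short known list and again handled computationally.'' That is not true: a simple group acting on the cosets of any non-maximal subgroup gives a transitive imprimitive action, and there are infinitely many of these (already $A_n$ has many conjugacy classes of non-maximal subgroups), so there is no finite list and nothing to compute case by case. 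The argument the paper uses is structural: every transitive action of a simple group is automatically \emph{quasiprimitive} (the only nontrivial normal subgroup is the whole group, which is transitive), and the theorem of Devillers, Harper and Morgan \cite{dev} on quasiprimitive groups then gives $D(\A)=2$ for all transitive imprimitive simple actions. Without this observation, or some substitute for it, your case analysis does not close.

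A second, smaller inaccuracy: the values $D(\B)=3$ or $4$ for $\B\in\mathcal L$ are not read off from Tables~\ref{t:1}--\ref{t:2}; those tables only certify regular sets in the \emph{intransitive} sums $\B^{(2)}$, $\B||_\psi\B$ and $\B||_\phi\C$, not the distinguishing numbers of the transitive groups themselves. The paper takes these values from \cite{dev}, though the direct GAP computation you suggest would also work since each group in $\mathcal L$ has small degree.
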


\begin{proof}
If $\A$ is primitive, then by \cite{ser}, we get the list $\mathcal L$ of fourteen exceptional simple permutation groups that have no regular set. The distinguishing numbers for these groups are computed in \cite{dev}. If $\A$ is transitive imprimitive, then as a simple group is quasiprimitive, and by  \cite[Theorems~2]{dev}, $D(\A)=2$. 
 
Thus, we may assume that $\A$ is intransitive, and by Theorem~\ref{p:simple} (and the following remarks), $\A$ is a parallel sum of at least two transitive components, all abstractly isomorphic to $\A$. 
 
If at least one of the components $\C$ is different from $A_n$ and is not on the list $\mathcal L$, then $D(\C)=2$, and the result follows by Lemma~\ref{l:reg}. So, we may assume that all the components are isomorphic to some $A_n$ or one of the groups in the list $\mathcal L$.
 
Suppose first that $\A=\B^{(k)}$ is a parallel multiple. Then, in the case when $\B=A_n$, the result follows by Lemma~\ref{l:An}, and in case when $\B\in \mathcal L$, the result follows by Lemma~\ref{l:h2} combined with Lemma~\ref{l:reg}.  
 
Next, suppose that $\A$ is a parallel sum of the same components $\B$ different from $\B^{(k)}$. Then, if $\B=A_n$, the result follows by Lemma~\ref{l:Ann}. The fact that $D(A_6||_\psi A_6)=3$ may be easily checked using GAP. If $\B\in \mathcal L$, then the result is by Lemma~\ref{l:hh} and Lemma~\ref{l:reg}. 
 
It remains to consider the situation when $\A$ has two transitive components $\B$ and $\C$ that \emph{are not} permutation isomorphic, but are abstractly isomorphic. In this case the result follows by Lemma~\ref{l:pary} combined with Lemma~\ref{l:reg}.
\end{proof}
 
\begin{Corollary}\label{c:}
If the automorphism group of a graph $\Gamma$ is simple, then $D(\Gamma)=2$.
\end{Corollary}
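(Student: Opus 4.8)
The plan is to combine the Theorem above with the elementary fact that the automorphism group of a graph is always \emph{$2$-closed}: if $\A=Aut(\Gamma)$, then $\A$ preserves the edge set $E(\Gamma)$, which is a union of orbits of $\A$ on ordered pairs of vertices, so the $2$-closure $\A^{(2)}$ also preserves $E(\Gamma)$; hence $\A^{(2)}\le\A$ and therefore $\A=\A^{(2)}$. The goal is thus to show that none of the simple permutation groups with $D\ge 3$ occurring in the Theorem is $2$-closed.

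First I would strip off fixed points. Writing $Aut(\Gamma)=\A'\oplus I_m$, where $\A'$ is the restriction of $Aut(\Gamma)$ to the set of non-fixed vertices, we get $\A'\cong Aut(\Gamma)$ simple with no fixed points and $D(\Gamma)=D(Aut(\Gamma))=D(\A')$. Moreover $\A'$ is again $2$-closed: a permutation of the non-fixed vertices preserving the orbits of $\A'$ on pairs extends, by fixing the remaining vertices, to a permutation preserving the orbits of $Aut(\Gamma)$ on pairs, hence it lies in $Aut(\Gamma)^{(2)}=Aut(\Gamma)$ and restricts back into $\A'$. So $\A'$ is a $2$-closed simple permutation group without fixed points; by the Theorem either $D(\A')=2$, in which case $D(\Gamma)=2$ as wanted, or $\A'$ is one of the exceptional groups listed there.

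It then remains to verify that every exceptional group with distinguishing number at least $3$ fails to be $2$-closed, which I would do by cases. For a parallel multiple $\A'=A_n^{(k)}$ with $n\ge5$: one checks that $A_n^{(k)}$ has exactly the same orbits on ordered pairs as the diagonal group $S_n^{(k)}$ --- within a copy because $A_n$ is already $2$-transitive on $n$ points, and between two copies because for both groups the cross pairs split into the ``matching'' orbit and its complement --- so the proper inclusion $A_n^{(k)}\subsetneq S_n^{(k)}$ shows $A_n^{(k)}$ is not $2$-closed. Each remaining exceptional group other than $A_6||_\psi A_6$ is one of the fourteen groups of the list $\mathcal L$ in its stated action, and each of those acts $2$-transitively on its underlying set; hence its only invariant graphs are the empty and the complete one, its $2$-closure is the full symmetric group, and, being simple, it is a proper subgroup thereof. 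Finally, for $\A'=A_6||_\psi A_6$ the decisive point is that, $\psi$ being a nonpermutation automorphism of $A_6$, it carries the point stabilizers of the degree-$6$ action to the \emph{other} conjugacy class of subgroups isomorphic to $A_5$ --- the transitive ones; hence in $A_6||_\psi A_6$ the stabilizer of a point of one orbit acts transitively on the other orbit, so $A_6||_\psi A_6$ is transitive on the set of cross pairs and therefore has exactly the same pair-orbits as the direct sum $A_6\oplus A_6$. Since $A_6||_\psi A_6\subsetneq A_6\oplus A_6$, this group is not $2$-closed either.

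The only genuinely non-routine step is the last one, which rests on the classical fact --- already invoked in Lemma~\ref{l:Ann} --- that the outer automorphism of $A_6$ interchanges the two conjugacy classes of its $A_5$-subgroups; the rest is either the trivial remark about empty and complete graphs or a finite check that the fourteen groups of $\mathcal L$ are $2$-transitive. With these cases settled, no exceptional group of the Theorem can be the automorphism group of a graph (even after restoring fixed vertices), so whenever $Aut(\Gamma)$ is simple we are in the non-exceptional case and $D(\Gamma)=2$.
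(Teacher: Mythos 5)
Your proof is correct and follows essentially the same route as the paper: both arguments exclude the exceptional groups of the Theorem by showing their orbits on ordered pairs coincide with those of a strictly larger group (your $2$-closure framing is just a clean formalization of this, and your explicit treatment of fixed points is a welcome extra). The one genuine difference is the case $A_6||_\psi A_6$: the paper verifies by GAP that there is a single cross-orbital, whereas you derive it conceptually from the fact that the nonpermutation automorphism of $A_6$ swaps the two conjugacy classes of $A_5$-subgroups, so point stabilizers of one orbit act transitively on the other; this replaces a machine check with a classical fact and is a nice improvement.
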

\begin{proof}
First, if $\Gamma$ is transitive, then  the theorem above shows that all transitive exceptions are double-transitive, which means that none of them is the automorphism group of a graph.
 
For $\A=A_6 ||_\psi A_6$ we consider the orbitals of $\A$ (orbits in the action of $\A$ on pairs). There are three orbitals: two corresponding to each orbit consisting of pairs of points in the orbit, and one consisting of pairs with a point in each orbit. The latter, that there is only one orbital consisting of such pairs, may be checked using GAP. This means that the automorphism group of a graph admitting all permutations in $\A$ as automorphisms is $S_6\oplus S_6$, not $\A$.
 
In case of $A_n^{(k)}$, $k \geq 2$,  the situation is similar. The difference is that in this case there are two orbitals corresponding to every two  orbits of $A_n^{(k)}$ consisting of pairs with a point in each orbit. It can be easily seen, without GAP, that the orbitals of $A_n^{(k)}$ are the same as those of  $S_n^{(k)}$, and the former is not the automorphism group of any graph.  
\end{proof}

\end{document}